\numberwithin{equation}{section}
\numberwithin{figure}{section}
\theoremstyle{plain}
\newtheorem{thm}{\protect\theoremname}
  \theoremstyle{definition}
  \newtheorem{defn}[thm]{\protect\definitionname}
  \theoremstyle{remark}
  \newtheorem{rem}[thm]{\protect\remarkname}
  \providecommand{\definitionname}{Definition}
  \providecommand{\remarkname}{Remark}
\providecommand{\theoremname}{Theorem}
\begin{document}

\title{A note on the metrizability of spaces}

\author{Ittay Weiss}
\begin{abstract}
With the blessing of hind sight we consider the problem of metrizability
and show that the classical Bing-Nagata-Smirnov Theorem and a more
recent result of Flagg give complementary answers to the metrization
problem, that are in a sense dual to each other.
\end{abstract}
\maketitle

\section{Introduction}

Consider the categories ${\bf Top}$ of topological spaces and continuous
mappings, and ${\bf Met}$ of metric spaces and continuous mappings.
The standard construction which associates with a metric space $(S,d)$
the topology $\mathcal{O}(S,d)$ generated by the open balls $\{B_{r}(x)\mid x\in S,r>0\}$,
where $B_{r}(x)=\{y\in S\mid d(x,y)<r\}$, is the object part of a
functor $\mathcal{O}:{\bf Met}\to{\bf Top}$ which on morphisms is
given by $f\mapsto f$. Of course, the functor $\mathcal{O}$ is not
invertible, not even in the weakest reasonable sense; the categories
${\bf Met}$ and ${\bf Top}$ are simply not equivalent. It is natural
though to consider ways in which $\mathcal{O}$ can be non-trivially
turned into an equivalence. Notice that since $\mathcal{O}$ is fully-faithful,
the problem is equivalent to asking for ways of turning $\mathcal{O}$
into an essentially surjective functor (without altering the morphisms
too much so as to retain the fully-faithfulness of $\mathcal{O}$).
There are two immediate candidates for approaches to this problem,
namely either by restricting the codomain of $\mathcal{O}$ or by
enlarging the domain. We state this as the following \emph{metrization
problem}: Augment the functor $\mathcal{O}:{\bf Met}\to{\bf Top}$
to obtain a diagram
\[
\xymatrix{{\bf Top_{M}}\ar[r]\ar@<2pt>[rd]^{M} & {\bf Top}\ar@<2pt>[dr]^{M}\\
 & {\bf Met}\ar[r]\ar[u]^{\mathcal{O}}\ar@<2pt>[ul]^{\mathcal{O}} & {\bf Met_{T}}\ar@<2pt>[ul]^{\mathcal{O}}
}
\]
such that
\begin{itemize}
\item Both horizontal arrows are inclusion functors.
\item Each of the two $(M,\mathcal{O})$ pairs is an equivalence of categories.
\item $\mathcal{O}:{\bf Met_{T}}\to{\bf Top}$ extends $\mathcal{O}:{\bf Met}\to{\bf Top}$
along ${\bf Met}\to{\bf Met_{T}}$.
\item $\mathcal{O}:{\bf Met}\to{\bf Top}$ extends $\mathcal{O}:{\bf Met}\to{\bf Top_{M}}$
along ${\bf Top_{M}}\to{\bf Top}$. 
\end{itemize}
One obvious way of restricting the codomain is to consider the image
of $\mathcal{O}$ in ${\bf Top}$ and trivially obtain that $\mathcal{O}$
is an equivalence onto its image. Topologically identifying this image
is of course nothing but the problem of specifying a necessary and
sufficient condition for a space to be metrizable. In the years 1950
and 1951, Bing, Nagata, and Smirnov proved, independently, the following
result. 
\begin{thm}
A topological space $S$ is metrizable if, and only if, it is $T_{0}$,
regular, and admits a $\sigma$-discrete basis. 
\end{thm}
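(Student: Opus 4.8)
The plan is to prove both implications of the equivalence, the forward direction being comparatively routine and the reverse carrying the real weight. For the forward direction, suppose $(S,d)$ is a metric space. Then $S$ is Hausdorff (hence $T_{0}$) and regular, since disjoint points and point/closed-set pairs are separated by comparing distances. The substantive claim is that $S$ admits a $\sigma$-discrete basis. I would obtain this from the lemma that \emph{every open cover of a metric space has a $\sigma$-discrete open refinement}, which is the technical core of A.~H.~Stone's theorem that metric spaces are paracompact; it is proved directly by well-ordering the cover and shrinking its members scale by scale in powers of $2$. Applying this lemma to each cover $\{B_{1/n}(x)\mid x\in S\}$ produces a $\sigma$-discrete open refinement $\mathcal{V}_{n}$, and $\bigcup_{n}\mathcal{V}_{n}$ is then the desired $\sigma$-discrete basis.

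For the reverse direction, suppose $S$ is $T_{0}$, regular, with $\sigma$-discrete basis $\mathcal{B}=\bigcup_{n}\mathcal{B}_{n}$, each $\mathcal{B}_{n}$ discrete. First I would note that regular together with $T_{0}$ upgrades to $T_{3}$ (regular Hausdorff), which is what makes the separating functions below useful. The first genuine step is to prove that \emph{a regular space with a $\sigma$-discrete basis is normal}: given disjoint closed sets $A$ and $B$, for each point of $A$ select a basic open set whose closure misses $B$, sort these selections by their index $n$, and combine the countably many levels by an inclusion--exclusion that trims each level by the closures of the earlier ones to produce disjoint open hulls of $A$ and $B$.

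With normality available I would invoke Urysohn's lemma to build separating functions. By regularity, whenever $x\in B$ with $B$ basic there is a basic $C$ with $x\in C\subseteq\overline{C}\subseteq B$; for each such pair with $C\in\mathcal{B}_{m}$ and $B\in\mathcal{B}_{n}$, fix a continuous $f_{C,B}\colon S\to[0,1]$ equal to $1$ on $\overline{C}$ and to $0$ off $B$. For each pair $(n,m)$ I would then assemble the functions indexed by $C\in\mathcal{B}_{m}$ into a single map into $\prod_{C\in\mathcal{B}_{m}}[0,1]$ equipped with the \emph{uniform} metric. The crucial point, and the place where the hypothesis does its essential work, is that discreteness of $\mathcal{B}_{m}$ forces every point to have a neighbourhood on which all but one of these coordinate functions vanish, which is exactly what renders the map continuous into the uniform-metric product rather than merely into the product topology. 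Since a countable product of metric spaces is metrizable, the combined map $F\colon S\to\prod_{(n,m)}Y_{n,m}$ lands in a metric space and is continuous.

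It then remains to check that $F$ is an embedding. Injectivity and the fact that $F$ is open onto its image both follow from the separating property of the $f_{C,B}$: given $x$ and a closed set not containing it, regularity and the basis condition supply a pair $(C,B)$ with $x\in C$ and $\overline{C}\subseteq B$ exhibiting a coordinate on which $F(x)$ is bounded away from the images of points outside $B$. Hence $F$ is a homeomorphism onto a subspace of a metric space, so $S$ is metrizable. I expect the main obstacle to be the reverse direction throughout, and within it the twin points where the $\sigma$-discrete hypothesis is indispensable: the normality lemma, and the verification that the assembled map into the uniform-metric product is continuous. Everything else is a matter of carefully bookkeeping indices.
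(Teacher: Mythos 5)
Your proposal cannot be checked against the paper's own argument, because the paper gives none: it quotes the Bing--Nagata--Smirnov theorem as a classical result and builds on it, so your attempt stands or falls on its own. Most of it is sound. The forward direction via Stone's lemma on $\sigma$-discrete open refinements is the standard route, and your normality lemma is the standard trimming argument; it works precisely because discreteness of each $\mathcal{B}_n$ makes the union of the closures of any subfamily closed, so the level-by-level subtraction of closures is legitimate.

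The genuine gap is in the continuity step. You claim that discreteness of $\mathcal{B}_m$ ``forces every point to have a neighbourhood on which all but one of these coordinate functions vanish.'' That is false for the functions you built: $f_{C,B}$ is required to vanish off $B\in\mathcal{B}_n$, not off $C\in\mathcal{B}_m$, so its support is controlled by the level-$n$ set while your coordinates are indexed by the level-$m$ family. A neighbourhood meeting only one member of $\mathcal{B}_m$ can still lie inside a single $B$ that contains the closures of infinitely many distinct $C'\in\mathcal{B}_m$, and on it infinitely many coordinates $f_{C',B}$ may take intermediate values; since Urysohn functions are otherwise unconstrained, the family need not be equicontinuous, and the map into the uniform-metric product can genuinely fail to be continuous (take $B=S$ a basis element of $\mathcal{B}_n$ on the real line and choose the $f_{C',B}$ to oscillate like $|\sin(kx)|$ away from the sets $\overline{C'}$). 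Invoking discreteness of $\mathcal{B}_n$ instead only cuts you down to the coordinates sharing one $B_0$, which may still be infinitely many. The repair is to aggregate per $B$: for each pair $(n,m)$ and each $B\in\mathcal{B}_n$, the set $K_B=\bigcup\{\overline{C}\mid C\in\mathcal{B}_m,\ \overline{C}\subseteq B\}$ is closed exactly because $\mathcal{B}_m$ is discrete; fix one Urysohn function $g_{n,m,B}$ equal to $1$ on $K_B$ and to $0$ off $B$, and index the level-$(n,m)$ coordinates by $B\in\mathcal{B}_n$. Now each coordinate vanishes off the very member of the discrete family that indexes it, your local-vanishing claim becomes true, and separation survives: for $x\in U$ open pick $B\in\mathcal{B}_n$ with $x\in B\subseteq U$ and $C\in\mathcal{B}_m$ with $x\in C$ and $\overline{C}\subseteq B$, so that $g_{n,m,B}(x)=1$ while $g_{n,m,B}$ vanishes off $U$. (This is essentially Bing's original argument; the Nagata--Smirnov-style alternative is to show every open set is an $F_\sigma$ and use the strong form of Urysohn's lemma to obtain $f_{n,B}$ positive exactly on $B$, scaled into $[0,1/n]$.) With that change the rest of your outline, including the embedding verification, goes through.
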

Thus, if we restrict ${\bf Top}$ to the full subcategory ${\bf Top}_{M}$
spanned by the regular $T_{0}$ spaces that admit a $\sigma$-discrete
basis, then the functor $\mathcal{O}:{\bf Met}\to{\bf Top}$ factors
through the inclusion ${\bf Top_{M}}\to{\bf Top}$, giving rise to
$\mathcal{O}:{\bf Met}\to{\bf Top_{M}}$, clearly an equivalence.
In particular, this functor together with the functor $M:{\bf Top_{M}}\to{\bf Met}$
which on objects sends $S$ to $M(S)$ where $M(S)$ is any of the
metric spaces $(S,d)$ guaranteed to exist by the Bing-Nagata-Smirnov
Theorem (and $f\mapsto f$ on morphisms) fulfill the requirements
of the left triangle in the metrization problem above. 

We now turn to identify a solution to the other triangle in the metrization
problem. What is required now is an extension of the category ${\bf Met}$
such that for it the statement that every space is metrizable is correct.
The approach we present here is taken from Flagg's \cite{MR1452402}.

First, we recall some definitions regarding posets and lattices. In
a poset $V$, joins and meets are denoted by $\bigvee$ and $\bigwedge$
respectively. The \emph{well above }relation on $V$ is denoted by
$x\prec y$ (which is read as ``$y$ is well-above $x$'') if for
all subsets $S\subseteq V$, if $x\ge\bigwedge S$ then there exists
some $s_{0}\in S$ such that $y\ge s_{0}$. A \emph{complete lattice
}is a poset where every subset has a meet and a join. In particular,
a complete lattice has a smallest element $0$ and a largest element
$\infty$. A complete lattice is \emph{completely distributive }if
$y=\bigwedge\{x\in V\mid x\succ y\}$, and it is a \emph{value distributive
lattice }if moreover $\infty\succ0$ and $x\wedge y\succ0$ whenever
$x\succ0$ and $y\succ0$. A \emph{quantale} is a complete lattice
$Q$ together with an associative and commutative binary operation
$+$ on $Q$ such that $x+0=x$ holds for all $x\in Q$ and $x+\bigwedge S=\bigwedge x+S$
for all $x\in Q$ and $S\subseteq Q$. 
\begin{defn}[Flagg]
A \emph{value quantale }is a quantale $V$ such that as a complete
lattice it is a value distributive lattice. \end{defn}
\begin{rem}
In the literature, the more common definition of quantale requires
addition to commute with joins (and $\infty)$ rather than with meets
(and $0$). In other words, what Flagg calls a quantale will today
be more commonly referred to as an \emph{op-quantale}.\end{rem}
\begin{defn}
A $V$-\emph{continuity space}, where $V$ is a value quantale, is
a pair $(X,d)$ where $X$ is a set and $d:X\times X\to V$ is a function
satisfying $d(x,x)=0$ for all $x\in X$ and $d(x,z)\le d(x,y)+d(y,z)$
for all $x,y,z\in X$. $(X,d)$ is said to be \emph{separated }if
$d(x,y)=d(y,x)=0$ implies $x=y$, and $(X,d)$ is \emph{symmetric
}if $d(x,y)=d(y,x)$ holds for all $x,y\in X$.
\end{defn}
The prototypical value quantale is $[0,\infty]$ with its usual ordering
and with ordinary addition. A $[0,\infty]$-continuity space is then
precisely a semi-quasimetric space. The separated and symmetric $[0,\infty]$-continuity
spaces are precisely ordinary metric spaces (which are allowed to
attain $\infty$ as a distance). 

Let ${\bf Met_{T}}$ be the category whose objects are all pairs $(V,X)$
where $V$ is a value quantale and $X$ is a $V$-continuity space.
The morphisms $f:(V,X)\to(W,Y)$ in ${\bf Met_{T}}$ are functions
$f:X\to Y$ which are \emph{continuous }in the sense that for all
$x\in V$ and for all $\varepsilon\succ0$ in $W$ there exists a
$\delta\succ0$ in $V$ such that $d(f(x),f(y))\prec\varepsilon$
whenever $d(x,y)\prec\delta$. Evidently, ${\bf Met}$ embeds in ${\bf Met_{T}}$
by considering all separated and symmetric $V$-continuity spaces
for the value quantale $V=[0,\infty]$. 

Given any $V$-continuity space $X$, $x\in X$ and $\epsilon\succ0$
in $V$, th\emph{e set} $B_{\varepsilon}(x)=\{y\in X\mid d(x,y)\prec\varepsilon\}$
is called the \emph{open ball }of radius $\varepsilon$ about $x$.
Declare a set $U\subseteq X$ to be \emph{open }if for all $x\in U$
there exists $\varepsilon\succ0$ in $V$ with $B_{\varepsilon}(x)\subseteq U$. 
\begin{thm}
The collection of all open sets in a $V$-continuity space $X$ is
a topology \end{thm}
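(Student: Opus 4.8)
The plan is to verify the three axioms of a topology directly from the definition of openness, namely that $U$ is open exactly when every $x\in U$ admits some $\varepsilon\succ 0$ with $B_{\varepsilon}(x)\subseteq U$. Two of the three axioms are essentially immediate. The empty set is open vacuously. The whole space $X$ is open because for any $x\in X$ one may take $\varepsilon=\infty$, which satisfies $\infty\succ 0$ by the very definition of a value distributive lattice, and then $B_{\infty}(x)\subseteq X$ trivially. For an arbitrary union $U=\bigcup_{i}U_{i}$ of open sets and a point $x\in U$, choosing an index $j$ with $x\in U_{j}$ and a witnessing radius $\varepsilon\succ 0$ for $U_{j}$ gives $B_{\varepsilon}(x)\subseteq U_{j}\subseteq U$, so unions are open.

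The only substantive case is closure under finite intersections, and by induction it suffices to treat two open sets $U_{1},U_{2}$. Given $x\in U_{1}\cap U_{2}$, I would select $\varepsilon_{1}\succ 0$ and $\varepsilon_{2}\succ 0$ with $B_{\varepsilon_{1}}(x)\subseteq U_{1}$ and $B_{\varepsilon_{2}}(x)\subseteq U_{2}$, and then propose the meet $\delta=\varepsilon_{1}\wedge\varepsilon_{2}$ as the radius witnessing openness of $U_{1}\cap U_{2}$ at $x$. This is the lattice-theoretic replacement for the classical ``take the smaller of the two radii'' argument, and making it work requires two facts about the interaction of $\prec$ with $\wedge$.

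First, I must check that $\delta\succ 0$; this is precisely where the extra axiom of a value distributive lattice is indispensable, since it guarantees $x\wedge y\succ 0$ whenever $x\succ 0$ and $y\succ 0$, giving $\varepsilon_{1}\wedge\varepsilon_{2}\succ 0$. In an arbitrary completely distributive lattice the meet of two elements well-above $0$ need not be well-above $0$, so this is exactly the structural hypothesis that rescues the finite-intersection axiom, and I regard it as the crux of the argument. Second, I need $B_{\delta}(x)\subseteq B_{\varepsilon_{1}}(x)\cap B_{\varepsilon_{2}}(x)$. This follows from the monotonicity property that $a\prec b$ together with $b\le c$ implies $a\prec c$, a direct unwinding of the definition of the well-above relation: if $a\ge\bigwedge S$ then some $s_{0}\in S$ has $b\ge s_{0}$, whence $c\ge s_{0}$. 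Applying this with $b=\delta\le\varepsilon_{i}=c$, any $y$ with $d(x,y)\prec\delta$ also satisfies $d(x,y)\prec\varepsilon_{i}$, so $B_{\delta}(x)$ sits inside both balls and hence inside $U_{1}\cap U_{2}$. Combining the two facts, $\delta$ is a legitimate witnessing radius and $U_{1}\cap U_{2}$ is open, completing the verification.
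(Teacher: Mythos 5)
Your proof is correct, and in fact it supplies details that the paper itself omits: the paper's ``proof'' of this theorem is only a pointer to Theorem 4.2 of Flagg's paper, so your self-contained verification is the honest version of what is being cited. Your argument is the standard one underlying Flagg's result, and you isolate exactly the two lattice-theoretic facts that make the classical metric proof go through: first, that the set of elements well above $0$ is closed under binary meets, which is precisely the value distributive lattice axiom ($x\wedge y\succ 0$ whenever $x\succ 0$ and $y\succ 0$, with $\infty\succ 0$ giving a witness for the whole space); and second, the upward monotonicity of the well-above relation, $a\prec b$ and $b\le c$ imply $a\prec c$, which your unwinding of the definition establishes correctly (from $a\ge\bigwedge S$ one gets $s_{0}\in S$ with $b\ge s_{0}$, hence $c\ge s_{0}$). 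These are exactly the lemmas Flagg develops before his Theorem 4.2, so your route and the cited one coincide in substance; what your write-up adds is the explicit identification of the meet $\varepsilon_{1}\wedge\varepsilon_{2}$ as the replacement for ``$\min$'' and the observation that complete distributivity alone would not suffice for the finite-intersection axiom, which is a genuinely clarifying remark about why the definition of value quantale is what it is.
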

\begin{proof}
See the proof of Theorem 4.2 in \cite{MR1452402}.
\end{proof}
We thus obtain the functor $\mathcal{O}:{\bf Met_{T}}\to{\bf Top}$
sending a $V$-continuity space to the topological space generated
by its open sets and given on morphisms by $f\mapsto f$. The verification
that this is indeed a fully faithful functor is the claim that a function
$f:X\to Y$, for a $V$-continuity space $X$ and a $W$-continuity
space $Y$, is continuous in the $\epsilon-\delta$ sense above if,
and only if, it is continuous with respect to the induced open ball
topologies. The proof is identical to the analogous claim for ordinary
metric spaces. Clearly, $\mathcal{O}:{\bf Met_{T}}\to{\bf Top}$ extends
$\mathcal{O}:{\bf Met}\to{\bf Top}$ and it is obvious that $\mathcal{O}:{\bf Met_{T}}\to{\bf Top}$
is fully-faithful. It thus remains to show that it is essentially
surjective. In fact, it is surjective on objects as follows from Theorem
4.15 of \cite{MR1452402} which states the following:
\begin{thm}
Let $(S,\tau)$ be a topological space. There exists a value quantale
$V_{\tau}$ and a $V_{\tau}$-continuity space $(S,d)$ such that
$(S,\tau)=\mathcal{O}(S,d)$. 
\end{thm}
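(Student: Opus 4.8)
The plan is to produce, for the given space $(S,\tau)$, a single value quantale $V_{\tau}$ together with a $V_{\tau}$-continuity structure $d$ on $S$ whose open-ball topology is exactly $\tau$; since $\mathcal{O}$ has already been shown to be fully faithful, exhibiting one such $(S,d)$ with $\mathcal{O}(S,d)=(S,\tau)$ is all that is needed, and in fact gives surjectivity on objects. The only thing $\mathcal{O}$ sees is the relation $d(x,y)\prec\varepsilon$ for positive $\varepsilon$, so the real task is to encode, for every point $x$, its open neighbourhood filter by a supply of positive radii, and to do so with one family of radii shared across all points at once.

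I would begin from the classical observation that isolates the topological content. Each open set $U$ determines a $\{0,\infty\}$-valued quasi-pseudometric $d_{U}$, with $d_{U}(x,y)=\infty$ when $x\in U$ and $y\notin U$ and $d_{U}(x,y)=0$ otherwise; its triangle inequality is nothing more than the dichotomy that an intermediate point $y$ either lies in $U$ or does not, and the family $\{d_{U}\}_{U\in\tau}$ jointly induces $\tau$, the ball of small radius in the $U$-th coordinate about $x$ being $U$ itself. Recording this reduces the problem to a purely order-theoretic one: how to glue the coordinates $\{d_{U}\}$ into one $V$-valued $d$ for a genuine value quantale $V$.

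Here lies the main obstacle. The naive gluing takes $V=[0,\infty]^{\tau}$ with the pointwise structure; this is completely distributive, but it is \emph{not} a value quantale, because its positive cone fails to be a filter: a radius positive in one coordinate and a radius positive in a different coordinate have a meet that is finite in two coordinates and hence no longer well above $0$, so the axiom $x\wedge y\succ0$ for $x,y\succ0$ breaks down. The device that repairs this is to stop indexing radii by single open sets and index them instead by \emph{neighbourhood assignments} $\nu$, i.e. choices of one open neighbourhood $\nu(p)\ni p$ at every point $p$; these are closed under pointwise finite intersection, hence form a downward directed family, which is precisely what a filter of positive radii demands. I would build $V_{\tau}$ from this directed poset of assignments so that each $\nu$ determines a positive radius, these radii being downward directed and cofinal among the positives, with $0=d(x,x)$ as the bottom element and with $+$ taken to be join; I would then define $d(x,y)$ from the data of which assignments $\nu$ let $y$ escape $\nu(x)$. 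Taking the assignments transitive — which one can always arrange, since for an open $U$ the assignment sending every point of $U$ to $U$ is transitive and realises $U$ — makes the triangle inequality reduce once more to the dichotomy above, and by design the ball about $x$ of the radius attached to $\nu$ is exactly $\nu(x)$, so the balls range over all neighbourhoods and the ball topology is $\tau$.

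What then remains, and what I expect to be the genuinely hard verification, is that $V_{\tau}$ really is a value quantale. Complete distributivity should come for free from the order-theoretic presentation, and $d(x,x)=0$ together with the triangle inequality are exactly the elementary points above; the delicate axioms are the value-distributive ones, above all $x\wedge y\succ0$ whenever $x,y\succ0$. It is here that the closure of neighbourhood filters under finite intersection does the essential work, remedying precisely the failure that disqualified the product $[0,\infty]^{\tau}$, and reconciling the two competing demands on the positive cone — that it be rich enough to realise all the assignments as radii, yet still a filter. Once this is in place the remaining bookkeeping is routine, and one concludes $\mathcal{O}(S,d)=(S,\tau)$ on the nose, yielding the essential surjectivity of $\mathcal{O}\colon\mathbf{Met_{T}}\to\mathbf{Top}$ that completes the right-hand triangle of the metrization problem.
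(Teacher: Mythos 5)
Your strategy is sound and, as far as I can check, completable, but it is genuinely different from the construction the paper takes from Flagg's Theorem 4.15, where $V_{\tau}=\Omega(\tau)$ indexes radii by finite subsets $F\subseteq_{f}\tau$ and $d(x,y)=\{F\subseteq_{f}\tau\mid\forall U\in F,\ x\in U\implies y\in U\}$. In Flagg's version the triangle inequality is automatic, coordinate by coordinate, from exactly the dichotomy you isolate for a single open set, and the filter axiom for the positives comes from closure of finite subsets under union; the resulting quantale is point-free and canonical in the frame $\tau$. In yours the roles reverse: directedness is the easy part (pointwise intersection of assignments, and the check that transitivity survives $\nu\wedge\mu$ is exactly what is needed, and holds), while the triangle inequality is no longer free --- it is precisely transitivity of each $\nu$, so your restriction to transitive assignments is essential rather than cosmetic. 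To pin down what you leave implicit: take $V_{\tau}$ to be the lattice of up-sets of the meet-semilattice $N$ of transitive assignments, ordered by reverse inclusion, with $0=N$ and $+=\cap$ (the join, as you predicted), and set $d(x,y)=\{\nu\in N\mid y\in\nu(x)\}$; one then computes that $\varepsilon\succ0$ iff $\varepsilon\subseteq{\uparrow}\nu$ for some $\nu\in N$, that complete distributivity holds because this is a complete ring of sets, that the filter axiom follows from ${\uparrow}\nu\cup{\uparrow}\mu\subseteq{\uparrow}(\nu\wedge\mu)$, and that $B_{{\uparrow}\nu}(x)=\nu(x)$, whence $\mathcal{O}(S,d)=(S,\tau)$ on the nose; so the verification you flag as genuinely hard is in fact routine once the lattice is fixed. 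What each approach buys: Flagg's is functorial in $\tau$ and never mentions points, with the triangle inequality for free; yours is larger and point-dependent, in spirit the fine transitive quasi-uniformity (your assignments are transitive neighbornets, and the construction is close to Kopperman's value-semigroup metrization), and it makes the ball structure maximally transparent. The two are directly related: each finite $F\subseteq_{f}\tau$ induces the transitive assignment $\nu_{F}(x)=\bigcap\{U\in F\mid x\in U\}$ (empty intersection read as $S$), with $\nu_{F\cup G}=\nu_{F}\wedge\nu_{G}$, so Flagg's directed system of radii embeds into yours.
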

Inspecting the proof of that theorem gives rise to a very explicit
construction of the functor $M:{\bf Top}\to{\bf Met_{T}}$, namely
$M(S,\tau)=(V,S)$ where $V=\Omega(\tau)$ and $d:S\times S\to\Omega(\tau)$
is given by $d(x,y)=\{F\subseteq_{f}\tau\mid\forall U\in F,\, x\in U\implies y\in U\}$. 

This establishes Flagg's construction as a solution to the second
triangle in the statement of the metrization problem above and completes
our note on the complimentary nature of the Bing-Nagata-Smirnov metrization
theory and Flagg's metrization theorem. 

\bibliographystyle{plain}
\bibliography{/Users/ittayweiss/Documents/PapersInProgress/generalReferences}

\end{document}